\documentclass[12pt]{article}
\usepackage{amsmath}
\usepackage{amsthm}
\usepackage{amssymb}
\usepackage{mathdots}

\newtheorem{theorem}{Theorem}

\newtheorem{lemma}{Lemma}

\thispagestyle{empty}
\topmargin -0.1in
\textheight 8.5in
\oddsidemargin  31pt
\textwidth 13.5cm

\begin{document}

\vspace*{30px}

\begin{center}\Large
\textbf{A New Generalized Cassini Determinant}
\bigskip\large

Ivica Martinjak\\
University of Zagreb, Faculty of Science\\
Bijeni\v cka 32, HR-10000 Zagreb, Croatia\\
and\\
Igor Urbiha\\
Polytechnic of Zagreb\\
Vrbik 8, HR-10000 Zagreb, Croatia\\


\end{center}


\begin{abstract} 
In this paper we extend a notion of Cassini determinant to recently introduced hyperfibonacci sequences. We find $Q$-matrix for the $r$-th generation hyperfibonacci numbers and prove an explicit expression of the Cassini determinant for these sequences. 
\end{abstract}

\noindent {\bf Keywords:} hyperfibonacci numbers, Cassini identity, linear space, integer sequence, polytopic numbers\\
\noindent {\bf AMS Mathematical Subject Classifications:} 11B39, 11B37

\section{Introduction}

Given the second order recurrence relation, defined by 
\begin{eqnarray}\label{secondorderrecurr}
a_{n+2}=\alpha a_{n+1} + \beta a_{n},
\end{eqnarray}
where $\alpha$ and $\beta$ are constants, a sequence $(a_k)_{k \ge 0}$ is called a solution of (\ref{secondorderrecurr}) if its terms satisfy this recurrence. The set of all solutions of (\ref{secondorderrecurr}) forms a {\it linear space}, meaning that if $(a_k)_{k \ge 0}$ and $(b_k)_{k \ge 0}$ are two solutions then $(a_k+b_k)_{k \ge 0}$ is also a solution of (\ref{secondorderrecurr}). Furthermore, it holds true that for any constant $c$, $(ca_k)_{k \ge 0}$ is also a solution of (\ref{secondorderrecurr}). Using these basic properties of a linear space one can derive the identity  
\begin{eqnarray}\label{cassinipro}
a_{m}b_{m-1} - a_{m-1}b_{m} = (-\beta)^{m-1}(a_1b_0 - a_0b_1),
\end{eqnarray}
where $(a_k)_{k \ge 0}$ and $(b_k)_{k \ge 0}$ are two solutions of recurrence (\ref{secondorderrecurr}) \cite{Voll}.
When $\alpha=\beta=1$ and initial values of the terms are 0 and 1, respectively, the relation (\ref{secondorderrecurr}) defines the well known {\it Fibonacci sequence} $(F_k)_{k \ge 0}$. One can find more on this subject in a classic reference \cite{Vajda}. In case of the Fibonacci sequence relation (\ref{cassinipro}) reduces to 
\begin{eqnarray}\label{cassini}
F_{n-1}F_{n+1} - F_n^2 = (-1)^n
\end{eqnarray}
and it is called {\it Cassini identity} \cite{GKP, Mile, WeZe}. This relation can also be written in matrix form as 
\begin{eqnarray} \label{cassinimatrix}
\det
 \begin{pmatrix}
  F_{n} & F_{n+1}  \\
F_{n+1} & F_{n+2}\\
 \end{pmatrix}
=(-1)^n.
\end{eqnarray}

In this paper we study the {\it hyperfibonacci sequences} which are defined by the relation
\begin{eqnarray}\label{hyperFibo}
F_n^{(r)}  = \sum_{k=0} ^n F_k ^{(r-1)}, \enspace F_n^{(0)}= F_n, \enspace F_0^{(r)}=0, \enspace F_1^{(r)}=1,
\end{eqnarray}
where $r \in \mathbb{N}$ and $F_n$ is the $n$-th Fibonacci number. The number $F_n^{(r)}$ we shall call $n$-th hyperfibonacci number of {\it $r$-th generation}.
These sequences are recently introduced by Dill and Mez\H o \cite{DiMe}. Several interesting theoretical number and combinatorial properties of these sequences are already proven, including those available in \cite{CaZh}. Here we define the matrix 
\begin{eqnarray*}
A_{r,n}=
 \begin{pmatrix}
  F_{n}^{(r)} & F_{n+1}^{(r)} & \cdots & F_{n+r+1}^{(r)} \\
  F_{n+1}^{(r)} & F_{n+2}^{(r)} & \cdots & F_{n+r+2}^{(r)}  \\
  \vdots  & \vdots  & \ddots & \vdots  \\
   F_{n+r+1}^{(r)} &   F_{n+r+2}^{(r)} & \cdots &   F_{n+2r+2}^{(r)}
 \end{pmatrix}
\end{eqnarray*}
and we prove that $\det(A_{r,n})$ is an extension of (\ref{cassini}). Thus, we show that the generalization of the Cassini identity, expressed in a matrix form, holds true for the hyperfinonacci sequences.

\section{$Q$-matrix of the hyperfibonacci sequences}

According to the definition (\ref{hyperFibo}) obviously we have 
\begin{eqnarray} \label{obviousRec}
F_{n+1}^{(r)} = F_{n}^{(r)} + F_{n+1}^{(r-1)}.
\end{eqnarray}
In case $r=1$ the second term $ F_{n+1}^{(r-1)}$ is determined by the Fibonacci recurrence relation,
\begin{eqnarray*}
F_{n+3}^{(1)}&=& F_{n+2}^{(1)} + \Big(F_{n+2}^{(1)} - F_{n+1}^{(1)} \Big) + \Big(F_{n+1}^{(1)} - F_{n}^{(1)} \Big),
\end{eqnarray*}
thus we have
\begin{eqnarray} \label{basicRec1}
F_{n+3}^{(1)} &=& 2 F_{n+2}^{(1)} - F_{n}^{(1)}.
\end{eqnarray}
Now, iteratively using (\ref{basicRec1}) we derive the recurrence relation
\begin{eqnarray} \label{LemmaCase1}
F_{n+2}^{(1)} = F_{n+1}^{(1)} + F_{n}^{(1)} + 1,
\end{eqnarray}
\begin{eqnarray*}
F_{n+3}^{(1)} &=& F_{n+2}^{(1)} + 2 F_{n+1}^{(1)} - F_{n-1}^{(1)} - F_{n}^{(1)}\\
&=& F_{n+2}^{(1)} + F_{n+1}^{(1)} + 2F_{n}^{(1)} - F_{n-2}^{(1)}-F_{n-1}^{(1)}-F_{n}^{(1)}\\
&=& F_{n+2}^{(1)} + F_{n+1}^{(1)} + \cdots + F_{3}^{(1)} - F_{0}^{(1)} - F_{1}^{(1)} - \cdots - F_{n}^{(1)}\\
&=& F_{n+2}^{(1)} + F_{n+1}^{(1)} +1.
\end{eqnarray*}
When $r=2$ we use the same approach to get recurrence for the second generation of the hyperfibonacci numbers,
\begin{eqnarray}\label{LemmaRec2}
F_{n+2}^{(2)} = F_{n+1}^{(2)} + F_{n}^{(2)} + n+2. 
\end{eqnarray}
Namely, in this case the second term in (\ref{obviousRec}) is determined by obtained recurrence relation (\ref{LemmaCase1}). This means that again we can perform the $(n+1)$-step iterative procedure, this time using 
\begin{eqnarray}
F_{n+3}^{(2)} = 2 F_{n+2}^{(2)} - F_n^{(2)} + 1.
\end{eqnarray}
The fact that terms indexed 3 through $n$ cancel each other and that $(n+1)$ 1s remains, completes the proof of (\ref{LemmaRec2}). 

Recall that {\it polytopic numbers} are generalization of square and triangular numbers. These numbers can be represented by a regular geometrical arrangement of equally spaced points. The $n$-th regular $r$-topic number $P_n^{(r)}$ is equal to 
\begin{eqnarray}
P_n^{(r)}= \binom{n+r-1}{r}.
\end{eqnarray}  

When $r=3$, the $i$-th step of the iterative procedure described above results with an extra $i$, which sum to a triangular number $\binom{n+3}{2}$ after the final $(n+1)$st iteration. Furthermore, in the next case we add the $i$-th triangular number in $i$-th step of iteration. According to the properties of polytopic numbers, these numbers sum to the tetrahedral number $\binom{n+4}{3}$. In general, in the $i$-th step of the iteration we add $i$-th regular $(r-1)$-topic number and sum of these numbers after the final step of the procedure is the regular polytopic number $\binom{n+r}{r-1}$. Now we collect all this reasoning into the following

\begin{lemma} \label{figurateNumbers} The difference between $n$-th $r$-generation hyperfibonacci number and the sum of its two consecutive predecessors is $n$-th regular (r-1)-topic number,
\begin{eqnarray} \label{eqLemaHyperPoly}
F_{n+2}^{(r)}=F_{n+1}^{(r)}+F_{n}^{(r)} + \binom{n+r}{r-1}.
\end{eqnarray}
\end{lemma}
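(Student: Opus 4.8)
The plan is to induct on the generation index $r$, using $r=1$ (equation (\ref{LemmaCase1}), where $\binom{n+1}{0}=1$) as the base case; one could equally start from $r=0$, where (\ref{eqLemaHyperPoly}) is just the Fibonacci recurrence and $\binom{n}{-1}=0$. For brevity write $D_n^{(r)} := F_{n+2}^{(r)} - F_{n+1}^{(r)} - F_n^{(r)}$, so that the lemma asserts $D_n^{(r)} = \binom{n+r}{r-1}$.

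The core step is to extract from (\ref{obviousRec}) a recurrence relating consecutive generations. Applying (\ref{obviousRec}) with index $n+1$ gives $F_{n+2}^{(r)} - F_{n+1}^{(r)} = F_{n+2}^{(r-1)}$, hence $D_n^{(r)} = F_{n+2}^{(r-1)} - F_n^{(r)}$. Subtracting the analogous identity at $n-1$ and rewriting the two differences that appear by (\ref{obviousRec}) once more (once in generation $r$, once in generation $r-1$) should yield
\begin{eqnarray*}
D_n^{(r)} - D_{n-1}^{(r)} &=& F_{n+2}^{(r-2)} - F_n^{(r-1)} \;=\; D_n^{(r-1)},
\end{eqnarray*}
valid for $r\ge 2$ and $n\ge 1$. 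In other words $D_n^{(r)} = D_{n-1}^{(r)} + D_n^{(r-1)}$, which is exactly Pascal's rule for the conjectured value, since $\binom{n+r}{r-1}=\binom{n+r-1}{r-1}+\binom{n+r-1}{r-2}$.

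Granting this recurrence, the induction closes quickly. Fixing $r\ge 2$ and assuming (outer hypothesis) $D_n^{(r-1)}=\binom{n+r-1}{r-2}$ for all $n\ge 0$, I would induct on $n$. The base case $n=0$ is a one-line computation from (\ref{hyperFibo}): $F_2^{(r)}=F_2^{(r-1)}+1=r+1$, so $D_0^{(r)}=(r+1)-1-0=r=\binom{r}{r-1}$. The inductive step is immediate from the recurrence together with the inner and outer hypotheses and Pascal's rule. I expect the only delicate point to be the derivation of the cross-generation recurrence: one must apply (\ref{obviousRec}) to precisely the right indices and generations and keep the ranges straight ($r\ge 2$ so that $F^{(r-2)}$ is defined, $n\ge 1$ so that $D_{n-1}^{(r)}$ is meaningful), after which the remainder is routine. (Alternatively one could sum the recurrence over $n$ and evaluate by the hockey-stick identity, but routing it through Pascal's rule keeps the bookkeeping lighter.)
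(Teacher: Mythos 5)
Your proposal is correct. The key step checks out: from (\ref{obviousRec}) one gets $D_n^{(r)}:=F_{n+2}^{(r)}-F_{n+1}^{(r)}-F_n^{(r)}=F_{n+2}^{(r-1)}-F_n^{(r)}$, and subtracting the same identity at $n-1$ and applying (\ref{obviousRec}) once in generation $r-1$ (at index $n+1$) and once in generation $r$ (at index $n-1$) indeed gives $D_n^{(r)}=D_{n-1}^{(r)}+D_n^{(r-1)}$ for $r\ge 2$, $n\ge 1$; together with $D_0^{(r)}=F_2^{(r)}-1=r=\binom{r}{r-1}$ and the base generation $r=1$ (equation (\ref{LemmaCase1})) or $r=0$ with $\binom{n}{-1}=0$, the double induction closes via Pascal's rule. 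This is a genuinely different, and tighter, mechanism than the paper's: there, for fixed $r$ the claim is obtained by rewriting the previous generation's recurrence into a relation of the form $F_{n+3}^{(r)}=2F_{n+2}^{(r)}-F_n^{(r)}+P^{(r-1)}$ (as in (\ref{basicRec1})), iterating it $n+1$ times so that intermediate terms telescope, and summing the accumulated $(r-2)$-topic corrections into an $(r-1)$-topic number by the hockey-stick property of polytopic numbers; the general-$r$ step is only described verbally. Your route replaces that telescoping-plus-summation by a single two-variable recurrence that matches Pascal's rule, making the induction on $n$ explicit and avoiding any separate identity for sums of polytopic numbers (hockey-stick being, in effect, iterated Pascal). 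What the paper's iteration buys is the intermediate recurrences it reuses elsewhere in the discussion; what yours buys is a short, fully rigorous proof of the lemma itself.
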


We can also write relation (\ref{eqLemaHyperPoly}) as
\begin{eqnarray*}
F_{n+2}^{(r)}=F_{n+1}^{(r)}+F_{n}^{(r)} + P_{n+2}^{(r-1)}.
\end{eqnarray*}

Hyperfibonacci sequences can be defined by the vector recurrence relation
\begin{eqnarray}\label{vectorRecurrence}
 \begin{pmatrix}
F_{n+1}^{(r)}\\
F_{n+2}^{(r)}\\
\vdots \\
F_{n+r+2}^{(r)}
 \end{pmatrix}
=
Q_{r +2}
 \begin{pmatrix}
F_{n}^{(r)}\\
F_{n+1}^{(r)}\\
\vdots \\
F_{n+r+1}^{(r)}
 \end{pmatrix}
\end{eqnarray}
where $Q_{r+2}$ is a square matrix 
\begin{eqnarray} \label{QmatrixDef}
Q_{r+2}= 
 \begin{pmatrix}
0 & 1 &  0 & \cdots & 0 & 0\\
0 & 0 & 1 &  \cdots & 0 & 0\\
\vdots & \vdots & \vdots  &\ddots  &\vdots   & \vdots  \\
0 & 0 & 0 &\cdots & 1 & 0\\
0 & 0 & 0  &\cdots & 0 & 1\\
q_1 & q_2 & q_3 &\cdots & q_{r+1} & q_{r+2}\\
 \end{pmatrix}.
\end{eqnarray}
In order to determine elements $q_1,\ldots,q_{r+2}$ we use the fact that terms from $-r$ through $0$ of the $r$-th generation hyperfibonacci numbers takes values 0,
$$
\ldots \pm 1, 0, 0, \ldots, 0, 1, r+1,\ldots
$$
This follows from Lemma \ref{figurateNumbers} since we have 
\begin{eqnarray}
\binom{(n-2)+r}{r-1} = \frac{n(n+1)(n+2)\cdots (n+r-2)}{(r-1)!}.
\end{eqnarray}
These expressions are obviously equal to 0 for $n=0,-1,\ldots,-r$. 

In particular, when $n=-r+2$ we get
\begin{eqnarray*}
 \begin{pmatrix}
0\\
0\\
\vdots \\
0\\
1\\
F_{2}^{(r)}
 \end{pmatrix} 
=
 \begin{pmatrix}
0 & 1 &  0 & \cdots & 0 & 0\\
0 & 0 & 1 &  \cdots & 0 & 0\\
\vdots & \vdots & \vdots  &\ddots  &\vdots   & \vdots  \\
0 & 0 & 0 &\cdots & 1 & 0\\
0 & 0 & 0  &\cdots & 0 & 1\\
q_1 & q_2 & q_3 &\cdots & q_{r+1} & q_{r+2}\\
 \end{pmatrix}
 \begin{pmatrix}
0\\
0\\
\vdots \\
0\\
0\\
1
 \end{pmatrix}
\end{eqnarray*}
meaning that
$
q_{r+2}= F_2^{(r)}.
$
In the same way we obtain relations for all elements of $Q_{r+2}$,
\begin{eqnarray*}
q_{r+2}&=&F_2^{(r)}\\
q_{r+1}&=&F_3^{(r)} - F_2^{(r)} q_{r+2}\\
q_{r}&=&F_4^{(r)} - F_3^{(r)} q_{r+2} -  F_2^{(r)} q_{r+1}\\
& &\cdots \\
q_{1}&=&F_{r+3}^{(r)} - F_{r+2}^{(r)} q_{r+2} - \cdots -  F_2^{(r)} q_{2}.
\end{eqnarray*}
This reasoning gives the next Theorem \ref{thmQmatrix}.

\begin{theorem} \label{thmQmatrix}
For the hypefibonacci sequences we have
\begin{eqnarray}
A_{r,n} = Q_{r+2}^{n} A_{r,0}.
\end{eqnarray}
\end{theorem}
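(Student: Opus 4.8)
The plan is to prove $A_{r,n}=Q_{r+2}^n A_{r,0}$ by induction on $n$, with the vector recurrence (\ref{vectorRecurrence}) doing the work in the inductive step. First I would observe that the columns of $A_{r,n}$ are exactly the vectors appearing in (\ref{vectorRecurrence}): the $j$-th column of $A_{r,n}$ (for $j=0,1,\dots,r+1$) is $\bigl(F_{n+j}^{(r)},F_{n+j+1}^{(r)},\dots,F_{n+j+r+1}^{(r)}\bigr)^{T}$. Thus $A_{r,n}$ is the matrix whose columns are the ``state vectors'' of the sequence at times $n,n+1,\dots,n+r+1$.

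For the base case $n=0$ there is nothing to prove, since $Q_{r+2}^{0}$ is the identity. For the inductive step, assume $A_{r,n}=Q_{r+2}^{n}A_{r,0}$; it suffices to show $A_{r,n+1}=Q_{r+2}A_{r,n}$. Apply the vector recurrence (\ref{vectorRecurrence}) to each of the $r+2$ columns of $A_{r,n}$: relation (\ref{vectorRecurrence}) with $n$ replaced by $n+j$ says precisely that $Q_{r+2}$ sends the $j$-th column of $A_{r,n}$ to the $j$-th column of $A_{r,n+1}$. Since matrix multiplication acts columnwise, this gives $Q_{r+2}A_{r,n}=A_{r,n+1}$, and then $A_{r,n+1}=Q_{r+2}A_{r,n}=Q_{r+2}^{n+1}A_{r,0}$, completing the induction.

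The one thing that genuinely needs to be established is that the matrix $Q_{r+2}$ with the bottom row $(q_1,\dots,q_{r+2})$ defined by the relations preceding the theorem really does satisfy (\ref{vectorRecurrence}); the first $r+1$ rows of (\ref{vectorRecurrence}) are trivial shifts, so the content is in the last row, i.e. in the claim that
\begin{eqnarray*}
F_{n+r+2}^{(r)} = q_1 F_{n}^{(r)} + q_2 F_{n+1}^{(r)} + \cdots + q_{r+2} F_{n+r+1}^{(r)}
\end{eqnarray*}
holds for all $n\ge 0$. This is where the bulk of the argument lies: one must verify that the $q_i$'s produced by the triangular system above are exactly the coefficients of a valid linear recurrence for $\bigl(F_n^{(r)}\bigr)$. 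I would do this by noting that the extended sequence has the initial pattern $\ldots,\pm1,0,0,\dots,0,1,r+1,\ldots$ (as established via Lemma \ref{figurateNumbers}), so the $r+2$ specializations $n=-r,\dots,-1$ of the desired identity reduce to the triangular definitions of $q_{r+2},q_{r+1},\dots,q_1$ in turn, pinning these constants down uniquely; then, because $\bigl(F_n^{(r)}\bigr)$ satisfies some fixed homogeneous linear recurrence of order $r+2$ (which follows from Lemma \ref{figurateNumbers}, since differencing (\ref{eqLemaHyperPoly}) enough times kills the polynomial term $\binom{n+r}{r-1}$ and leaves a constant-coefficient relation), the identity holding on $r+2$ consecutive values forces it to hold for all $n$. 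The main obstacle is thus purely bookkeeping: confirming that the order-$(r+2)$ recurrence obtained by annihilating the polytopic term has characteristic polynomial whose companion matrix is exactly $Q_{r+2}$, equivalently that the $q_i$ from the triangular system agree with that recurrence's coefficients.
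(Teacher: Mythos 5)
Your proof of the theorem itself is correct and is in substance the same as the paper's: the paper also just reads (\ref{vectorRecurrence}) columnwise as $A_{r,n}=Q_{r+2}A_{r,n-1}$ and iterates, which is exactly your induction. Where you go further is in justifying the premise: the paper simply asserts that the hyperfibonacci sequence satisfies the order-$(r+2)$ vector recurrence with the $q_i$ from the triangular system, whereas you sketch a proof — difference (\ref{eqLemaHyperPoly}) $r$ times to annihilate the polytopic term, getting a homogeneous constant-coefficient recurrence of order $r+2$ (characteristic polynomial $(x^2-x-1)(x-1)^r$, reversible since its constant term is $\pm1$); the discrepancy $g(n)=F_{n+r+2}^{(r)}-\sum_{i}q_iF_{n+i-1}^{(r)}$ satisfies the same recurrence and vanishes at the $r+2$ consecutive indices used to define the $q_i$, hence vanishes identically. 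That argument is sound and actually closes a step the paper leaves implicit. Two small remarks: the specializations pinning down the $q_i$ are $n=-r,\dots,1$ (which is $r+2$ values), not $n=-r,\dots,-1$ as you wrote; and your closing ``bookkeeping'' worry — checking that the companion matrix of the annihilator recurrence is exactly $Q_{r+2}$ — is unnecessary, since the vanishing-on-$(r+2)$-consecutive-values argument already gives the last-row identity for all $n$ without comparing the two recurrences.
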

\begin{proof}
Relation (\ref{vectorRecurrence}) can be written as $A_{r,n}=Q_{r+2}A_{r,n-1}$. Now the statement of theorem follows immediately,
\begin{eqnarray*}
A_{r,n}&=&Q_{r+2} A_{r,n-1}= Q_{r+2}^2A_{r,n-2}\\
 &=& Q_{r+2}^nA_{r,0}.
\end{eqnarray*}
\end{proof}
Elements $q_1,\ldots, q_{r+2}$ can be expressed explicitly. In particular, expressions for $q_r, q_{r+1}, q_{r+2}$ are
\begin{eqnarray*}
q_{r+2} &=& 1+r\\
q_{r+1} &= & 1 - \binom{r+1}{2}\\
q_r&=&\frac{r^3-7r}{6}.
\end{eqnarray*}
As an example we calculate hyperfibonacci numbers $F_3^{(2)}, F_4^{(2)},\ldots, F_9^{(2)}$ of the second generation, collected in the matrix $A_{2,3}$.
For the second generation of the hyperfibonacci sequences we have 
\begin{eqnarray*}
A_{2,0}&=&
\begin{pmatrix}
0 & 1  & 3 & 7 \\
1 & 3  & 7 & 14 \\
3 & 7 & 14 & 26 \\
7 & 14  & 26 & 46 
\end{pmatrix}\\
Q_{4}&=&
 \begin{pmatrix}
0 & 1  & 0 & 0 \\
0 & 0  & 1 & 0 \\
0 & 0 & 0 & 1 \\
1 & -1  & -2 & 3 
 \end{pmatrix},\\
\end{eqnarray*}
according to (\ref{hyperFibo}) and (\ref{QmatrixDef}).
Now we determine the matrix $A_{2,3}$ by Theorem \ref{thmQmatrix},
\begin{eqnarray*}
A_{2,3} &=& 
\begin{pmatrix}
0 & 1  & 0 & 0 \\
0 & 0  & 1 & 0 \\
0 & 0 & 0 & 1 \\
1 & -1  & -2 & 3 
\end{pmatrix}^3
\begin{pmatrix}
0 & 1  & 3 & 7 \\
1 & 3  & 7 & 14 \\
3 & 7 & 14 & 26 \\
7 & 14  & 26 & 46 
\end{pmatrix}
=
\begin{pmatrix}
7 & 14  & 26 & 46 \\
14 & 26  & 46 & 79 \\
26 & 46 & 79 & 133 \\
46 &  79 & 133 & 221 
\end{pmatrix}.
\end{eqnarray*}
Note that the eigenvalues of $Q_4$ are $ \phi, 1, 1, \bar{ \phi}$, 
where 
\begin{eqnarray*} \label{GoldenRatio}
 \phi = \frac{1+ \sqrt{5}}{2}, \enspace  \bar{ \phi} = \frac{1- \sqrt{5}}{2}.
\end{eqnarray*}


A class of matrices (\ref{QmatrixDef}) has some further interesting properties. Here we point out that the determinant of such a matrix is $-1$. This is demonstrated in the following 

\begin{lemma} \label{Qdeterminant} For $r \in \mathbb{N}$ the determinant of a matrix $Q_{r+2}$ takes value $-1$,
\begin{eqnarray*}
\det ( Q_{r+2} ) =-1. 
\end{eqnarray*}
\end{lemma}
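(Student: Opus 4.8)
The plan is to exploit the very simple structure of $Q_{r+2}$: it is almost a permutation (shift) matrix, differing from it only in the last row. Concretely, the first $r+1$ rows each have a single $1$ located one step to the right of the diagonal, and the only entry that can interfere with a clean cofactor expansion sits in the last row, first column, namely $q_1$. The natural approach is to expand the determinant along the first column. Only one entry in that column is (generically) nonzero, the bottom-left entry $q_1$, which sits in position $(r+2,1)$.

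First I would set $N=r+2$ and expand $\det(Q_N)$ along the first column, obtaining $\det(Q_N) = (-1)^{N+1} q_1 \cdot M$, where $M$ is the minor obtained by deleting the last row and the first column of $Q_N$. The point is that this minor $M$ is the determinant of an $(N-1)\times(N-1)$ upper-triangular matrix whose diagonal entries are all $1$: after deleting row $N$ and column $1$, row $i$ (for $i=1,\dots,N-1$) has its single $1$ now sitting on the main diagonal of the reduced matrix, and all entries below the diagonal vanish. Hence $M=1$, and so $\det(Q_N) = (-1)^{N+1} q_1 = (-1)^{r+3} q_1 = (-1)^{r+1} q_1$.

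Next I would identify $q_1$. From the recursive relations for the $q_j$ derived just before Theorem~\ref{thmQmatrix} together with Lemma~\ref{figurateNumbers}, one can in principle compute $q_1$ directly; but there is a slicker route. Apply the vector recurrence (\ref{vectorRecurrence}) at the index $n=-r-1$, i.e.\ read off the last-row relation $F_{1}^{(r)} = q_1 F_{-r-1}^{(r)} + q_2 F_{-r}^{(r)} + \cdots + q_{r+2} F_{0}^{(r)}$. By the discussion in the text, the terms $F_{-r}^{(r)},\dots,F_{0}^{(r)}$ all vanish, so this collapses to $1 = q_1 F_{-r-1}^{(r)}$. A short computation from Lemma~\ref{figurateNumbers} (or from the explicit binomial formula for the negatively-indexed terms, evaluated at $n=-r-1$) shows $F_{-r-1}^{(r)} = (-1)^{r+1}$, which forces $q_1 = (-1)^{r+1}$. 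Substituting back gives $\det(Q_{r+2}) = (-1)^{r+1}\cdot(-1)^{r+1} = 1$\,\ldots

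I should be careful here: the claimed value in the lemma is $-1$, not $+1$, so in writing the proof I must track signs meticulously — in particular the parity of $(-1)^{N+1}$ with $N=r+2$, and the sign of the first nonvanishing negatively-indexed hyperfibonacci term $F_{-r-1}^{(r)}$. The main obstacle, and the only real content, is pinning down $q_1$ together with these sign bookkeeping details; the cofactor expansion itself is immediate from the shape of $Q_{r+2}$. An alternative that sidesteps computing $q_1$ by hand is to use the eigenvalue description: the recurrence has characteristic polynomial $x^{r+2} - q_{r+2}x^{r+1} - \cdots - q_1$, whose roots are $\phi,\bar\phi$ together with $1$ with multiplicity $r$ (as illustrated for $r=2$ in the example above); then $\det(Q_{r+2})$ equals the product of the eigenvalues, $\phi\cdot\bar\phi\cdot 1^{r} = -1$, immediately. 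I would present the cofactor argument as the primary proof and mention the eigenvalue product as a remark, since the former is elementary and self-contained given the earlier material.
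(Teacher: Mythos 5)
Your cofactor route is genuinely different from the paper's (the paper compares $\det(A_{r,-r})$ with $\det(A_{r,-r-1})$ via $A_{r,-r}=Q_{r+2}A_{r,-r-1}$ and multiplicativity of the determinant), and the skeleton is sound: expanding along the first column of the companion-type matrix gives $\det(Q_{r+2})=(-1)^{r+3}q_1$, and reading the last-row relation at $n=-r-1$ gives $1=q_1F_{-r-1}^{(r)}$ since $F_{-r}^{(r)}=\cdots=F_{0}^{(r)}=0$. The gap is the sign of $F_{-r-1}^{(r)}$: it is $(-1)^{r}$, not $(-1)^{r+1}$. Indeed, Lemma \ref{figurateNumbers} at $n=-r-1$ reads $F_{-r+1}^{(r)}=F_{-r}^{(r)}+F_{-r-1}^{(r)}+\tfrac{(-r+1)(-r+2)\cdots(-1)}{(r-1)!}$, and since the two sequence terms on the right and the left-hand side all vanish, $F_{-r-1}^{(r)}=-(-1)^{r-1}=(-1)^{r}$; this also agrees with the $(1,1)$ entry $(-1)^r$ of the matrix $A_{r,-r-1}$ displayed in the paper's proof, and with the example $Q_4$, where $q_1=+1=(-1)^2$ rather than your $(-1)^{r+1}=-1$. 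With the corrected sign, $q_1=(-1)^{r}$ and $\det(Q_{r+2})=(-1)^{r+3}(-1)^{r}=-1$, so your argument closes; but as written you computed $+1$, noticed the contradiction with the lemma, and left it unresolved, which is a genuine (if easily repaired) defect in the proof.

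Two smaller points. First, using the last-row relation at $n=-r-1$ presupposes that the vector recurrence (\ref{vectorRecurrence}) holds at negative indices; this is on the same footing as the paper's own use of $A_{r,-r}=Q_{r+2}A_{r,-r-1}$, so it is acceptable here, but it deserves a sentence (the extension to negative $n$ comes from running Lemma \ref{figurateNumbers} backwards). Second, the eigenvalue fallback is not yet a proof for general $r$: the paper verifies the spectrum $\phi,\bar\phi,1,\dots,1$ only for $r=2$, so you would have to show that the characteristic polynomial of $Q_{r+2}$ is $(x-1)^{r}(x^{2}-x-1)$ (e.g.\ by noting that $(E-1)^{r}$ annihilates the polytopic term in Lemma \ref{figurateNumbers}) before reading off $\det(Q_{r+2})=\phi\bar\phi=-1$. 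Keep the cofactor argument as the main proof, with the sign of $F_{-r-1}^{(r)}$ fixed.
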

\begin{proof}
We prove this statement by means of comparing determinants of matrices $A_{r,-r}$ and $A_{r,-r-1}$,
\begin{eqnarray*}
A_{r,-r} = Q_{r+2}A_{r,-r-1}.
\end{eqnarray*}
For matrix $A_{r,-r-1}$ we have
\begin{eqnarray*}
\det(A_{r,-r-1}) &=& 
\det
\begin{pmatrix}
(-1)^r  & 0 & 0 &\cdots &0 \\
  0 & 0 & 0 &\cdots &1  \\
 \vdots   &\vdots   &\vdots & \iddots  &\vdots  \\
 0 &   0 & 1 &\cdots &   F_{r-2}^{(r)}\\
 0 &   1 & r+1 & \cdots &   F_{r-1}^{(r)}
\end{pmatrix}_{r \times r}\\
&=&
(-1)^r \det
\begin{pmatrix}
0  & 0 &  \cdots & 0 & 1 \\
  0 & 0 &  \cdots & 1  &r+1  \\
  \vdots  & \vdots    & \iddots & \vdots& \vdots  \\
 0 &   1 &  \cdots &  F_{r-3}^{(r)} &  F_{r-2}^{(r)}\\
 1 &  r+1 &  \cdots &  F_{r-2}^{(r)} &  F_{r-1}^{(r)}
\end{pmatrix}_{(r-1) \times (r-1)}\\
&=& (-1)^{r+1} (-1)^{\lfloor (r-1)/2 \rfloor+1}\\
&=& (-1)^{\lfloor  r/2 \rfloor}.
\end{eqnarray*}
On the other hand, $\det(A_{r,-r}) = (-1)^{\lfloor  r/2 \rfloor} $ which proves that
\begin{eqnarray}
\det(A_{r,-r}) = - \det(A_{r,-r-1}).
\end{eqnarray}
Now, the statement of lemma follows immediately by the Binet-Cauchy theorem.
\end{proof}

It is worth mentioning that in \cite{LeSh} authors give some properties of the $k$-generalized Fibonacci $Q$-matrix.

\section{Cassini identity in a matrix form}

\begin{lemma}
	\label{L:1}
	\begin{equation}
		\label{Eq:3}
\det
		\begin{pmatrix}
			F_{n}-1 & F_{n+1}-1 & F_{n+2}-1 \\
			F_{n+1}-1 & F_{n+2}-1 & F_{n+3}-1 \\
			F_{n+2}-1 & F_{n+3}-1 & F_{n+4}-1
		\end{pmatrix}
		=(-1)^n,\ n\geq 0.
	\end{equation}
\end{lemma}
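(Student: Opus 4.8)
The plan is to reduce the $3\times3$ determinant in (\ref{Eq:3}) to the classical $2\times2$ Cassini determinant by elementary row and column operations that exploit the Fibonacci recurrence $F_{k+2}=F_{k+1}+F_k$. Write $B_n$ for the matrix on the left of (\ref{Eq:3}). First I would apply the row operation $R_3\to R_3-R_2-R_1$: in each column the third entry is $F_{k+2}-1$ while the second and first are $F_{k+1}-1$ and $F_k-1$, so the recurrence collapses the new third row to $(1,1,1)$ (the three cancellations each leave the constant $+1$). Next I would perform $C_3\to C_3-C_2-C_1$; the same cancellation turns the first two entries of the third column into $1$, while the last entry becomes $1-1-1=-1$. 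Neither operation changes the determinant.

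At this point the matrix is
\begin{equation*}
\begin{pmatrix}
F_n-1 & F_{n+1}-1 & 1\\
F_{n+1}-1 & F_{n+2}-1 & 1\\
1 & 1 & -1
\end{pmatrix},
\end{equation*}
and I would finish by adding the third row to each of the first two rows. This restores the block $\bigl(\begin{smallmatrix}F_n & F_{n+1}\\ F_{n+1} & F_{n+2}\end{smallmatrix}\bigr)$ in the top-left corner and produces the column $(0,0,-1)^{\mathsf T}$ on the right, so expanding along that column leaves $-\det\bigl(\begin{smallmatrix}F_n & F_{n+1}\\ F_{n+1} & F_{n+2}\end{smallmatrix}\bigr)=-(F_nF_{n+2}-F_{n+1}^2)$. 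Applying the Cassini identity (\ref{cassini}) with index $n+1$ gives $F_nF_{n+2}-F_{n+1}^2=(-1)^{n+1}$, whence the whole determinant equals $-(-1)^{n+1}=(-1)^n$.

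No step is genuinely deep; the only things to watch are (i) choosing the operations in an order that actually collapses $B_n$ to the classical Cassini block rather than to a messier $2\times2$ minor, and (ii) the sign bookkeeping at the end, since the reduction introduces one factor $-1$ and the relevant instance of (\ref{cassini}) is the one with exponent $n+1$. An alternative route, closer in spirit to Section~2, is to observe that left multiplication by $Q_3=\bigl(\begin{smallmatrix}0&1&0\\0&0&1\\-1&0&2\end{smallmatrix}\bigr)$ sends each column $(F_k-1,F_{k+1}-1,F_{k+2}-1)^{\mathsf T}$ of $B_n$ to $(F_{k+1}-1,F_{k+2}-1,F_{k+3}-1)^{\mathsf T}$, since $F_{k+3}=2F_{k+2}-F_k$; hence $B_{n+1}=Q_3B_n$ and $\det B_{n+1}=\det(Q_3)\det B_n=-\det B_n$ by Lemma~\ref{Qdeterminant}, so the base computation $\det B_0=1$ again gives $\det B_n=(-1)^n$ by induction. (This also makes transparent that (\ref{Eq:3}) is the $r=1$ instance of the matrix Cassini identity announced in the introduction, via $\sum_{k=0}^{m}F_k=F_{m+2}-1$.)
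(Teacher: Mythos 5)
Your main argument is correct and follows essentially the same route as the paper's proof: the identical row and column operations (using $F_{k+2}=F_{k+1}+F_k$) reduce the matrix to $\bigl(\begin{smallmatrix}F_n & F_{n+1} & 0\\ F_{n+1} & F_{n+2} & 0\\ 1 & 1 & -1\end{smallmatrix}\bigr)$, and the final evaluation $-(F_nF_{n+2}-F_{n+1}^2)=(-1)^n$ matches the paper's sign bookkeeping exactly. The alternative $Q_3$-recursion you sketch is also valid, but it is not the argument the paper uses here.
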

\proof
Using the definition of Fibonacci numbers and elementary transformations on rows and columns of determinants we get
\[
\begin{array}{l}
\det
\begin{pmatrix}
F_{n}-1 & F_{n+1}-1 & F_{n+2}-1 \\
F_{n+1}-1 & F_{n+2}-1 & F_{n+3}-1 \\
F_{n+2}-1 & F_{n+3}-1 & F_{n+4}-1
\end{pmatrix}
\\
=
\det
\begin{pmatrix}
F_{n}-1 & F_{n+1}-1 & F_{n+2}-1 \\
F_{n+1}-1 & F_{n+2}-1 & F_{n+3}-1 \\
F_{n}+F_{n+1}-1 & F_{n+1}+F_{n+2}-1 & F_{n+2}+F_{n+3}-1
\end{pmatrix}
\\[1.7em]
=
\det
		\begin{pmatrix}
F_{n}-1 & F_{n+1}-1 & F_{n+2}-1 \\
F_{n+1}-1 & F_{n+2}-1 & F_{n+3}-1 \\
1 & 1 & 1
\end{pmatrix}

=
\det
		\begin{pmatrix}
F_{n}-1 & F_{n+1}-1 & F_{n}+F_{n+1}-1 \\
F_{n+1}-1 & F_{n+2}-1 & F_{n+1}+F_{n+2}-1 \\
1 & 1 & 1
\end{pmatrix} \\[1.7em]

=
\det
		\begin{pmatrix}
F_{n}-1 & F_{n+1}-1 & 1 \\
F_{n+1}-1 & F_{n+2}-1 & 1 \\
1 & 1 & -1
\end{pmatrix}
=
\det
		\begin{pmatrix}
F_{n} & F_{n+1} & 0 \\
F_{n+1} & F_{n+2} & 0 \\
1 & 1 & -1
\end{pmatrix}
\\
=
-(F_nF_{n+2}-F_{n+1}^2)=(-1)^n\qed
\end{array}
\]

\begin{lemma} For the first generation of hyperfibonacci sequences $(F_n^{(1)})_{n \ge 0}$
\[
\det
		\begin{pmatrix}
F_{n}^{(1)} & F_{n+1}^{(1)} & F_{n+2}^{(1)} \\
F_{n+1}^{(1)} & F_{n+2}^{(1)} & F_{n+3}^{(1)} \\
F_{n+2}^{(1)} & F_{n+3}^{(1)} & F_{n+4}^{(1)}
\end{pmatrix}
=(-1)^n.
\]
\end{lemma}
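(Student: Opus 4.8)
The plan is to reduce the claim to Lemma~\ref{L:1} via the classical closed form for partial sums of Fibonacci numbers. First I would observe that the first-generation hyperfibonacci number is just a partial sum, $F_n^{(1)}=\sum_{k=0}^n F_k$, and record its closed form $F_n^{(1)}=F_{n+2}-1$. This can be extracted from Lemma~\ref{figurateNumbers} with $r=1$: that lemma gives $F_{n+2}^{(1)}=F_{n+1}^{(1)}+F_{n}^{(1)}+1$, so the shifted sequence $G_n:=F_n^{(1)}+1$ satisfies the plain Fibonacci recurrence $G_{n+2}=G_{n+1}+G_n$ with $G_0=1$, $G_1=2$, whence $G_n=F_{n+2}$ and therefore $F_n^{(1)}=F_{n+2}-1$.

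Next I would substitute $F_m^{(1)}=F_{m+2}-1$ into every entry of the displayed $3\times 3$ matrix. It becomes
\[
\begin{pmatrix}
F_{n+2}-1 & F_{n+3}-1 & F_{n+4}-1 \\
F_{n+3}-1 & F_{n+4}-1 & F_{n+5}-1 \\
F_{n+4}-1 & F_{n+5}-1 & F_{n+6}-1
\end{pmatrix},
\]
which is exactly the matrix of Lemma~\ref{L:1} with the index $n$ replaced by $n+2$. Since $n\ge 0$ implies $n+2\ge 0$, that lemma applies and the determinant equals $(-1)^{n+2}=(-1)^n$, as claimed.

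There is essentially no obstacle: the only points requiring care are the verification of the closed form $F_n^{(1)}=F_{n+2}-1$ and the bookkeeping of the index shift, after which the statement is an immediate corollary of Lemma~\ref{L:1}. If one wishes to avoid quoting the summation identity, an alternative is to mimic the row-and-column reduction used in the proof of Lemma~\ref{L:1} directly on the hyperfibonacci matrix, using the recurrence $F_{n+2}^{(1)}=F_{n+1}^{(1)}+F_{n}^{(1)}+1$ in place of the plain Fibonacci recurrence; the stray $+1$ there plays the same role as the $-1$ translation in Lemma~\ref{L:1}, and the computation terminates in exactly the same fashion.
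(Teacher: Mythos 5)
Your proposal is correct and follows essentially the same route as the paper: both reduce the claim to Lemma \ref{L:1} by substituting the closed form $F_n^{(1)}=F_{n+2}-1$ and noting the index shift is harmless since $(-1)^{n+2}=(-1)^n$. The only cosmetic difference is that you derive the closed form from the recurrence $F_{n+2}^{(1)}=F_{n+1}^{(1)}+F_n^{(1)}+1$ with the initial values, while the paper quotes the partial-sum identity $\sum_{k=0}^{n}F_k=F_{n+2}-1$ directly.
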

\proof
By using relation 
\begin{eqnarray}
F_n^{(1)} =F_{n+2} -1
\end{eqnarray}
(that immediately follows from the elementary Fibonacci identity $\sum_{k=0}^{n}F_k=F_{n+2}-1$, $n \ge0$)
and Lemma \ref{L:1} we have
\[
\det
		\begin{pmatrix}
F_{n }^{(1)} & F_{n+1}^{(1)} & F_{n+2}^{(1)} \\
F_{n+1}^{(1)} & F_{n+2}^{(1)} & F_{n+3}^{(1)} \\
F_{n+2}^{(1)} & F_{n+3}^{(1)} & F_{n+4}^{(1)}
\end{pmatrix}
=
\det
		\begin{pmatrix}
F_{n+2}-1 & F_{n+3}-1 & F_{n+4}-1 \\
F_{n+3}-1 & F_{n+4}-1 & F_{n+5}-1 \\
F_{n+4}-1 & F_{n+5}-1 & F_{n+6}-1
\end{pmatrix}
=
(-1)^n\qed
\]

\begin{theorem} \label{CassiniDeterminantGeneralized}
For the sequence $(F_k^{(r)})_{k \ge 0}$, $r \in \mathbb{N}$ and $n \in \mathbb{Z}$ a determinant of a matrix $A_{r,n}$ takes values $\pm 1$,
\begin{eqnarray}
\det (A_{r,n})=(-1)^{n+ \big \lfloor \frac{r+3}{2} \big \rfloor}.
\end{eqnarray}
\end{theorem}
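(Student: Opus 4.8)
The plan is to reduce everything, via the $Q$-matrix recursion, to a single determinant with an obvious value. By Theorem \ref{thmQmatrix} --- or rather the relation $A_{r,m}=Q_{r+2}A_{r,m-1}$ on which it rests --- together with the invertibility of $Q_{r+2}$ (its determinant is $-1$ by Lemma \ref{Qdeterminant}), one has $A_{r,n}=Q_{r+2}^{\,n+r+1}A_{r,-r-1}$ for every $n\in\mathbb Z$. Taking determinants, using multiplicativity and $\det(Q_{r+2})=-1$, gives
\begin{equation*}
\det(A_{r,n})=(-1)^{\,n+r+1}\,\det\big(A_{r,-r-1}\big),\qquad n\in\mathbb Z,
\end{equation*}
so the theorem follows once $\det(A_{r,-r-1})$ is known. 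This evaluation is essentially already inside the proof of Lemma \ref{Qdeterminant}; I would nonetheless redo it directly so the argument is self-contained.

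Next I would evaluate $\det(A_{r,-r-1})$ from its shape. Its $(i,j)$ entry is $F^{(r)}_{-r-1+i+j}$ (indices $i,j$ from $0$ to $r+1$), so the indices occurring run from $-r-1$ to $r+1$. Iterating Lemma \ref{figurateNumbers} backwards from $F^{(r)}_0=0$, $F^{(r)}_1=1$ shows $F^{(r)}_k=0$ for $-r\le k\le0$ and $F^{(r)}_{-r-1}=(-1)^r$ (the latter because $\binom{-1}{r-1}=(-1)^{r-1}$). Hence the first row and the first column of $A_{r,-r-1}$ equal $\big((-1)^r,0,\dots,0\big)$, and Laplace expansion along the first row leaves $(-1)^r$ times the complementary $(r+1)\times(r+1)$ minor. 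In that minor every entry strictly above the main anti-diagonal is an $F^{(r)}_k$ with $-r\le k\le0$, hence $0$, while every entry on the main anti-diagonal equals $F^{(r)}_1=1$; so the minor is anti-triangular with anti-diagonal product $1$, and its determinant equals the sign of the order-reversing permutation on $r+1$ letters, namely $(-1)^{\binom{r+1}{2}}$. Therefore
\begin{equation*}
\det\big(A_{r,-r-1}\big)=(-1)^{r}\,(-1)^{\binom{r+1}{2}}.
\end{equation*}

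Combining the two displays gives $\det(A_{r,n})=(-1)^{\,n+2r+1+\binom{r+1}{2}}=(-1)^{\,n+1+\binom{r+1}{2}}$, and it only remains to verify the elementary parity identity $1+\binom{r+1}{2}\equiv\big\lfloor\tfrac{r+3}{2}\big\rfloor\pmod 2$; since $\binom{r+1}{2}=\tfrac{r(r+1)}{2}$, both sides are periodic of period $4$ in $r$ and take the values $1,0,0,1$ for $r\equiv 0,1,2,3$, which settles it. I expect the one spot needing genuine care is the second paragraph --- correctly pinning the corner value $F^{(r)}_{-r-1}=(-1)^r$ and the permutation sign $(-1)^{\binom{r+1}{2}}$ of the anti-triangular block, i.e.\ tracking the parities of $r$ and $\binom{r+1}{2}$ without slippage. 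The reduction in the first paragraph and the closing congruence are routine once that bookkeeping is pinned down.
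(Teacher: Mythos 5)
Your argument is correct and essentially reproduces the paper's own proof: both use multiplicativity of the determinant with $\det(Q_{r+2})=-1$ to isolate a single anchor determinant, and both evaluate that anchor from the anti-triangular pattern forced by $F^{(r)}_k=0$ for $-r\le k\le 0$ together with a reversal-permutation sign. The only difference is cosmetic: you anchor at $A_{r,-r-1}$ (the matrix already used in the proof of Lemma \ref{Qdeterminant}), with corner entry $(-1)^r$, instead of reducing $A_{r,0}$ by elementary transformations, and your exponent $n+1+\binom{r+1}{2}$ indeed has the same parity as $n+\big\lfloor\frac{r+3}{2}\big\rfloor$.
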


\begin{proof}
Using elementary transformations on matrix and Lemma \ref{Qdeterminant} we get
\begin{eqnarray*}
\det(A_{r,0})&=&
\det
 \begin{pmatrix}
  F_{0}^{(r)} & F_{1}^{(r)}  &  \cdots & F_{r-2}^{(r)} & F_{r-1}^{(r)} \\
  F_{1}^{(r)} & F_{2}^{(r)} &  \cdots& F_{r-1}^{(r)}  & F_{r}^{(r)}  \\
 \vdots    & \vdots  &     & \vdots  & \vdots  \\
  F_{r-2}^{(r)} & F_{r-1}^{(r)} &    \cdots & F_{2r-4}^{(r)} &    F_{2r-3}^{(r)}\\
   F_{r-1}^{(r)} & F_{r}^{(r)} &    \cdots & F_{2r-3}^{(r)} &    F_{2r-2}^{(r)}
 \end{pmatrix}\\
&=& -
\det
 \begin{pmatrix}
  0 & F_{0}^{(r)}  &  \cdots & F_{r-3}^{(r)} & F_{r-2}^{(r)} \\
  F_{0}^{(r)} & F_{1}^{(r)} &  \cdots & F_{r-2}^{(r)}  & F_{r-1}^{(r)}  \\
 \vdots    & \vdots  &     & \vdots  & \vdots \\
  F_{r-3}^{(r)} & F_{r-2}^{(r)} &    \cdots & F_{2r-5}^{(r)}  &   F_{2r-4}^{(r)}\\
   F_{r-2}^{(r)} & F_{r-1}^{(r)} &    \cdots & F_{2r-4}^{(r)}  &   F_{2r-3}^{(r)}
 \end{pmatrix}\\
&=&
(-1)^{r}
\det
 \begin{pmatrix}
  0 & 0  &  \cdots & 0 & 1 \\
  0 & 0 &  \cdots & 1 & F_{2}^{(r)}  \\
 \vdots    & \vdots  &   \iddots  & \vdots  & \vdots \\
   0 & 1 &    \cdots &   F_{r-2}^{(r)} & F_{r-1}^{(r)}\\
   1 & F_{2}^{(r)} &    \cdots &   F_{r-1}^{(r)} & F_{r}^{(r)}
 \end{pmatrix}\\
&=&
(-1)^{r} (-1)^{ \lfloor \frac{r+2}{2} \rfloor }
\det
 \begin{pmatrix}
   1 & F_{2}^{(r)} &    \cdots &   F_{r-1}^{(r)} & F_{r}^{(r)}\\
   0 & 1 &    \cdots &   F_{r-2}^{(r)} & F_{r-1}^{(r)}\\
 \vdots    & \vdots  &   \ddots  & \vdots  & \vdots \\
  0 & 0 &  \cdots & 1 & F_{2}^{(r)}  \\
  0 & 0  &  \cdots & 0 & 1 \\
 \end{pmatrix}\\
&=&  (-1)^{ \lfloor \frac{r+3}{2} \rfloor }.
\end{eqnarray*}
According to Theorem \ref{thmQmatrix} we obtain
\begin{eqnarray*}
\det (A_{r,n} ) &=& \det(Q_{r+2})^n \det(A_{r,0}) = (-1)^n \det(A_{r,0}) \\
&=& (-1)^n (-1)^{ \lfloor \frac{r+3}{2} \rfloor } = (-1)^{n+ \lfloor \frac{r+3}{2} \rfloor } ,
\end{eqnarray*}
which completes the statement of the theorem.

\end{proof}

\noindent 
Let $M=M^{(m,n,r)}$ be a matrix with $M_{i,j}=F_{n+i+j-2}^{(r)}$, $1\leq i,j\leq m$.
Theorem \ref{CassiniDeterminantGeneralized} can be restated as
\[
\det\left(M^{(n,r,r+2)}\right)=(-1)^{n+ \big \lfloor \frac{r+3}{2} \big \rfloor}.
\]
At the end let us show that for $m>r+2$ the following equlity holds:
\begin{eqnarray} \label{eqZeroDet}
\det\left(M^{(m,n,r)}\right)=0.
\end{eqnarray}

The proof of (\ref{eqZeroDet}) consists of performing elementary transformations on $M^{(m,n,r)}$ leading to a matrix having one column consisting of zeroes.

\noindent
Take a look at the $i-th$ row of $M^{(m,n,r)}$:
\[
[F_{n+i-1}^{(r)}\ F_{n+i}^{(r)}\ F_{n+i+1}^{(r)}\ \cdots\ F_{n+i+j-2}^{(r)}\ \cdots\ F_{n+i+m-3}^{(r)}\ F_{n+i+m-2}^{(r)}].
\]
Using (\ref{obviousRec}) and subtracting $j-th$ element from $(j+1)-st$ for $j=m-1,m-2,\ldots,2,1$ 
(thus simulating subtracting a column $j$ from column $j+1$ in a matrix $M^{(m,n,r)}$)
we get
\[
[F_{n+i-1}^{(r)}\ F_{n+i}^{(r-1)}\ F_{n+i+1}^{(r-1)}\ \cdots\ F_{n+i+j-2}^{(r-1)}\ \cdots\ F_{n+i+m-3}^{(r-1)}\ F_{n+i+m-2}^{(r-1)}].
\]
We can repeat the process for $j=m-1,m-2,\ldots, 3,2$ and get
\[
[F_{n+i-1}^{(r)}\ F_{n+i}^{(r-1)}\ F_{n+i+1}^{(r-2)}\ \cdots\ F_{n+i+j-2}^{(r-2)}\ \cdots\ F_{n+i+m-3}^{(r-2)}\ F_{n+i+m-2}^{(r-2)}].
\]
After repeating the process $r-th$ time (for $j=m-1,m-2,\ldots, r$), we get
\[
[F_{n+i-1}^{(r)}\ F_{n+i}^{(r-1)}\ F_{n+i+1}^{(r-2)}\ \cdots\ F_{n+i+r-2}^{(1)}\ F_{n+i+r-1}^{}\ \cdots\ F_{n+i+m-2}^{}].
\]
Since $m>r+2$ we have $n+i+r-1\leq n+i+m-4$ so the above row contains 
\[
[\cdots F_{n+i+r-1}^{}\ F_{n+i+r}^{}\ F_{n+i+r+1}^{} \cdots]
\]
at positions $r-1$, $r$ and $r+1$. Subtracting first two elements from the third, we get
\[
[\cdots F_{n+i+r-1}^{}\ F_{n+i+r}^{}\ 0 \cdots].
\]
That way we arrive at a matrix with column consisiting of zeroes whose determinant is therefore zero.


\end{document}